\theoremstyle{plain}
  \newtheorem{thm}{Theorem}[section]
  \newtheorem{cor}[thm]{Corollary}
  \newtheorem{lem}[thm]{Lemma}
\theoremstyle{definition}
\theoremstyle{remark}
\DeclareMathOperator{\bdim}{dim_B}
\DeclareMathOperator{\dist}{dist}
\DeclareMathOperator{\hd}{d_H}
\newcommand{\Num}{\mathfrak{N}}
\title{On the Separation Structure of Lalley-Gatzouras Fractals}
\begin{document}
	
\author{Li-Feng Xi}
\address{Department of Mathematics, Ningbo University, Ningbo,
	Zhejiang, 315211, P.~R.\ China}
\email{xilifengningbo@yahoo.com}

\author{Jun Jie Miao}
\address{Department of Mathematics, Shanghai Key Laboratory of PMMP, East China Normal University, No. 500, Dongchuan Road, Shanghai 200241, P. R. China}
\email{jjmiao@math.ecnu.edu.cn}

\author{Ying Xiong}
\address{Department of Mathematics, South China University of Technology,
	Guangzhou, 510641, P.~R. China}
\email{xiongyng@gmail.com}

\subjclass[2000]{28A80}

\keywords{Lalley-Gatzouras sets, uniform disconnectedness, gap sequence, quasisymmetric equivalence}

\thanks{Ying Xiong is the corresponding author.}

\thanks{Supported by National Natural Science Foundation of China (Grant Nos
	11201152, 11371329, 11471124), NSF of Zhejiang Province (No. LR13A010001), K. C. Wong Magna Fund in Ningbo University, Science and Technology Commission of Shanghai Municipality (No. 13dz2260400) and Morningside Center of Mathematics.}

\begin{abstract}
	We obtain a necessary and sufficient condition for Lalley-Gatzouras sets to be uniform disconnected. This enable us to find all Lalley-Gatzouras sets which are quasisymmetrically equivalent to the Cantor ternary set. As another application, we also study the limit behavior of the gap sequence of Lalley-Gatzouras sets.
\end{abstract}

\maketitle

\section{Introduction}\label{sec:intro}

Although every compact totally disconnected perfect set is homeomorphic to the Cantor ternary set, such sets in a metric space may have quite different geometric structures. So a natural question is to seek a general way to characterize the sets which are roughly like the Cantor ternary set. David and Semmes~\cite{DavSe97} answer this question in the sense of \emph{quasisymmetric equivalence}. Two metric spaces  are said to be quasisymmetrically equivalent if there is a quasisymmetric mapping from one onto the other. A mapping~$f$ from $(X,d)$ to $(Y,\rho)$ is called quasisymmetric if it is an injection and there exists a homeomorphism $\eta\colon[0,\infty)\to[0,\infty)$ such that for any $x,y,z\in X$ and $t>0$,
\[ d(x,y)\le td(x,z)\implies\rho(f(x),f(y))\le\eta(t)\rho(f(x),f(z)). \]
We refer to~\cite{DavSe97,Heino01} and the references therein for detailed description of quasisymmetric equivalence and quasisymmetric mappings.

With the notion of uniform disconnectedness, David and Semmes~\cite{DavSe97} proved that every compact, doubling, uniformly disconnected and uniformly perfect set is quasisymmetrically equivalent to the Cantor ternary set (see~\cite[Proposition~15.11]{DavSe97}). Note that every subset of Euclidean spaces is doubling, while Xie, Yin and Sun~\cite{XiYiS03} prove that every self-affine set is uniformly perfect if it is not singleton. It is therefore of interest to look at what self-affine sets are uniformly disconnected.

As an important object in fractal geometry and related fields, self-affine sets are composed of smaller affine copies of themselves. In the self-affine construction, contraction ratios may assume different values in different directions. This fact causes huge difficulties to obtain results which hold for all self-affine sets (see~~\cite{PerSo00}). Consequently, there are two approaches in the study of self-affine sets: finding results for \emph{generic} sets (almost all in some sense, see Falconer~\cite{Falco88,Falco92}) or for \emph{specific} sets (see~\cite{Baran07,Bedfo84,FenWa05,LalGa92,McMul84,Solom98}). 

In this paper, we study a special kind of self-affine sets on~$\mathbb R^2$: Lalley-Gatzouras sets, which were first studied by Lalley and Gatzouras~\cite{LalGa92}, as a generalization of McMullen sets~\cite{Bedfo84,McMul84}. We obtain a necessary and sufficient condition for such sets to be uniformly disconnected. This enable us to find all Lalley-Gatzouras sets which are quasisymmetrically equivalent to the Cantor ternary set. 

As another application, we use this result to study the gap sequence of Lalley-Gatzouras sets. Gap sequence first appear in the work of Besicovitch~\cite{BesTa54} on cutting-out sets in the line. Rao, Ruan and Yang~\cite{RaRuY08} generalized this concept to sets in higher dimensional spaces. In the follow-up study, the gap sequence is widely used to explore properties of fractals. In particular, it is often related to box dimension, especially for self-similar sets or cutting-out fractals, see~\cite{BesTa54,DeWaX15,Falco97,GaMoS07,MiXiX17,RaRuY08,XioWu09}. Roughly speaking, if a set~$E$ has a regular separation structure (e.g., $E$ is a self-similar sets satisfying the strong separation condition), then its gap sequence~$\{\alpha_k\}_k$ and its box dimension~$\bdim E$ are related by~$\alpha_k\asymp k^{-1/\bdim E}$, i.e., $c^{-1}\alpha_k\le k^{-1/\bdim E}\le c\alpha_k$ for a constant~$c$. We will prove this result for Lalley-Gatzouras sets by making use of uniform disconnectedness. 

This paper is organized as follows. We recall some basic definitions in Section~\ref{ssec:defn}; and present main results in Section~\ref{ssec:rslt}. In Section~\ref{sec:UD}, we study the uniform disconnectedness and prove Theorem~\ref{t:UD}. Finally, we discuss gap sequence and prove Theorem~\ref{t:GS} in Section~\ref{sec:GS}.

\subsection{Basic definitions}\label{ssec:defn}

We recall the definitions of Lalley-Gatzouras sets, uniform disconnectedness and gap sequence in this subsection.

\subsubsection*{Lalley-Gatzouras sets \textup{(see~\cite{LalGa92})}}

Given integers $m\ge2$ and $n_i\ge0$ with $i=1,2,\ldots,m$. Let $b_i>0$ be real numbers for $i=1,2,\ldots,m$ such that $\sum_{i=1}^m b_i=1$. Write $d_1=0$ and $d_i=b_1+b_2+\dots+b_{i-1}$ for $i=2,3,\dots,m$. Let
\[\mathcal{D}=\Bigl\{(i,j)\colon 1\le i\le m\ \text{for $n_i>0$ and $1\le j\le n_i$}\Bigr\}.\]
Let $\{a_{ij}\}_{(i,j)\in\mathcal D}$ and $\{c_{ij}\}_{(i,j)\in\mathcal D}$ be two sequences of nonnegative numbers such that $1>b_i> a_{ij}>0$ for each $(i,j)\in\mathcal D$, $\sum_{j=1}^{n_i}a_{ij}\le 1$ for each $i$, $c_{i(j+1)}-c_{ij}\ge a_{ij}$ for each $(i,j)\in\mathcal D$ and $1-c_{in_i}\ge a_{in_i}$ for each $i$.

For each $(i,j)\in \mathcal{D}$, we define a self-affine transformation on~$\mathbb R^2$ by
\begin{equation}\label{eq:Sk}
S_{ij}(x,y)^T=\begin{pmatrix}
a_{ij} & 0 \\
0 & b_i 
\end{pmatrix}\binom{x}{y}+ \binom{c_{ij}}{d_i},
\end{equation}
then the family $\{S_{ij}\}_{(i,j)\in \mathcal{D}}$ forms a self-affine iterated function system. According to Hutchinson~\cite{Hutch81}, there exists an attractor $E$, called a \emph{Lalley-Gatzouras set} \cite{LalGa92}, such that
\begin{equation}
E=\bigcup_{(i,j)\in \mathcal{D}}S_{ij}(E).
\end{equation}

Note that the hypotheses on $\{a_{ij}\}$, $\{b_i\}$ and $\{c_{ij}\}$ guarantee that the open rectangles $\{S_{ij}\bigl((0,1)^2\bigr)\}_{(i,j)\in\mathcal D}$ are pairwise disjoint subsets of $(0,1)^2$ with edges parallel to the $x$- and $y$-axes, are arranged in rows of height $b_i$, and have height greater than width. See Figure~\ref{fg1} present the first level structure of a Lalley-Gatzouras set.

\begin{figure}[h]
	\centering
	\includegraphics[width=10cm]{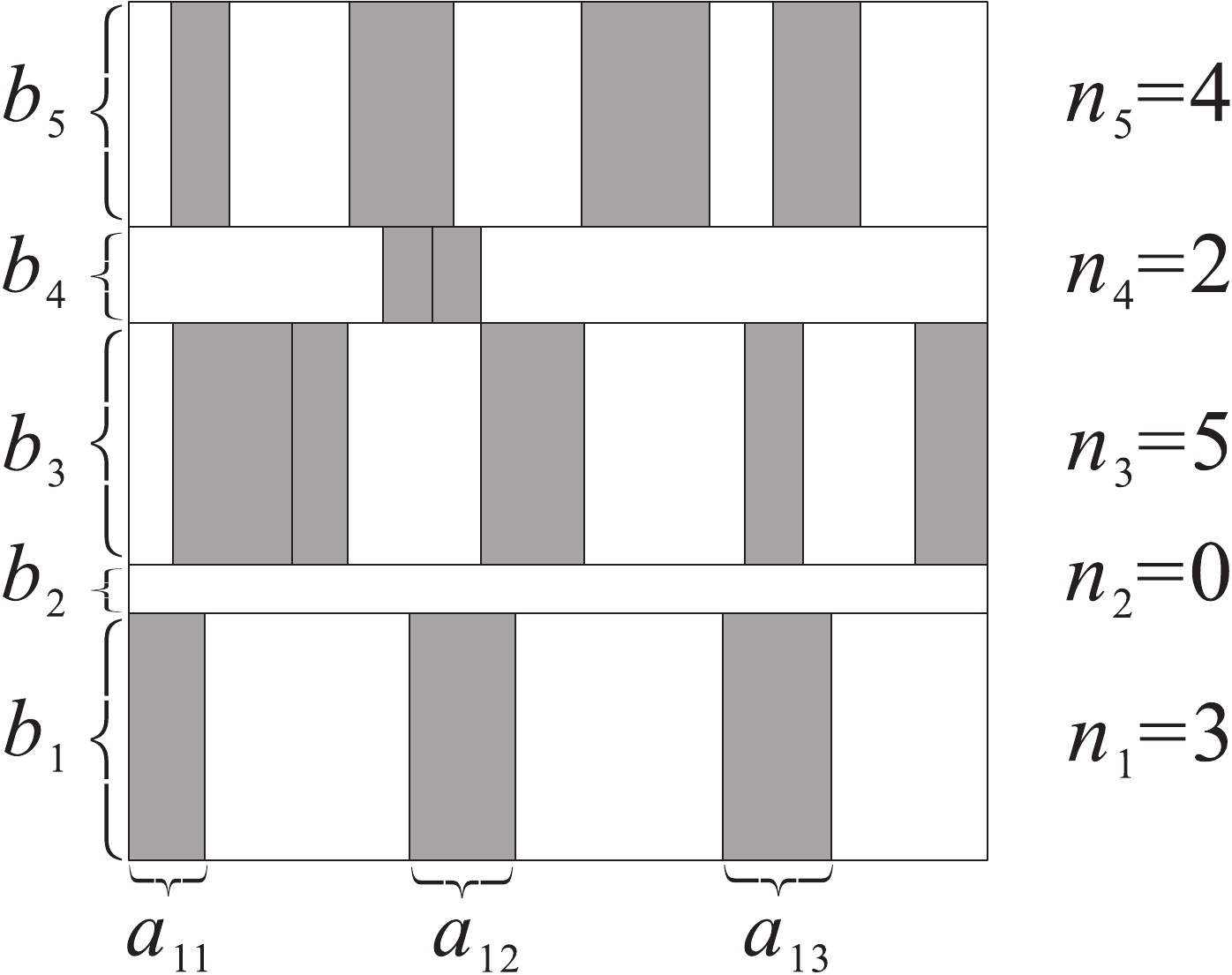}
	\caption{The first level structure of a Lalley-Gatzouras set}\label{fg1}
\end{figure}

As stated in \cite{LalGa92}, let $s_1\in\mathbb{R}$ be the unique real such that
\begin{equation}\label{def_s1}
\sum_{i=1, n_i\ne 0}^m b^{s_1}_i=1.
\end{equation}
Then the box dimension of $E$ is $\bdim E=s$, where $s$ is given by
\begin{equation}\label{eq:bdim}
\sum_{i=1, n_i\ne 0}^m \sum_{j=1}^{n_i}b^{s_1}_ia_{ij}^{s-s_1}=1.
\end{equation}

\subsubsection*{Uniform disconnectedness \textup{(see~\cite{Heino01,DavSe97})}}

Let $E$ be a subset of a metric space $(X,\rho)$. $E$ is called uniformly disconnected if there is a constant $C>1$ so that, for each $\xi\in E$ and $r>0$, we can find a subset~$A$ of~$E$ such that $B(\xi,r/C)\cap E\subset A\subset B(\xi,r)$ and $\dist(A,E\setminus A)\ge r/C$.

\subsubsection*{The gap sequence}

The gap sequence of cutting-out sets in the line was first studied by Besicovitch~\cite{BesTa54}. Rao, Ruan and Yang~\cite{RaRuY08} generalized this concept to sets in higher dimensional spaces.

Let $A$ be a compact subset of $\mathbb{R}^2$. For distinct $x,y\in A$, we say $x$ and~$y$ are \emph{$\delta$-equivalent} if there exists a sequence of points $a_0=x, a_1,\dots, a_k=y$ of $A$ such that $|a_{i+1}-a_i|\le\delta$ for $i=0,1,\dots,k-1$. Let $\Num(\delta)$ be the cardinality of the set of $\delta$-equivalent classes of $A$. Clearly the mapping $\Num\colon\mathbb{R}\to \mathbb{N}$ is non-increasing. We write $\Num(\delta^-)=\lim_{h\to 0^+}\Num(\delta-h)$. We say a sequence $\{\alpha_k\}_{k\ge1}$ is the \emph{gap sequence} of $A$ if the elements of $\{\alpha_k\}_{k\ge1}$ are made of the jump points of the function $\mathfrak{\delta}$ as $\delta$ decreases with multiplicity $\Num(\delta^-)-\Num(\delta)$,  that is, the value of $\delta$ is in the gap sequence $\{\alpha_k\}_{k}$ if and only if $\Num(\delta)<\Num(\delta^-)$ and the multiplicity
\[
\operatorname{card}\{k: \alpha_k=\delta\}=\Num(\delta^-)-\Num(\delta).
\]
We also write $\Num(\delta, A)$ to emphasize the dependence on the set $A$.

For an example, the gap sequence of the Cantor ternary set is 
\[ 1/3,1/9,1/9,\dots,\underbrace{1/3^n,\dots,1/3^n}_{2^{n-1}},\dots. \]

\subsection{Main results}\label{ssec:rslt}

Our first result concerns the uniform disconnectedness of Lalley-Gatzouras sets.
\begin{thm}\label{t:UD}
	A Lalley-Gatzouras set $E$ is uniformly disconnected if and only if $E$ is totally disconnected and there exists an $i\in\{1,2,\dots,m\}$ such that $n_i=0$.
\end{thm}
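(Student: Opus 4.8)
The plan is to treat the two directions separately, exploiting the fact that in a Lalley--Gatzouras set the vertical ratios $b_i$ dominate the horizontal ratios $a_{ij}$, so that deep cylinders $S_{\mathbf w}=S_{i_1j_1}\circ\cdots\circ S_{i_kj_k}$, of height $b_{\mathbf w}=b_{i_1}\cdots b_{i_k}$ and width $a_{\mathbf w}=a_{i_1j_1}\cdots a_{i_kj_k}$, satisfy $a_{\mathbf w}/b_{\mathbf w}\le\rho^{k}\to0$ with $\rho=\max_{(i,j)}a_{ij}/b_i<1$. I will use throughout the following chain reformulation of the definition: if there is a chain $z_0,\dots,z_N$ in $E$ with $|z_l-z_{l+1}|<r/C$ for all $l$, joining $z_0\in B(\xi,r/C)$ to $z_N\notin B(\xi,r)$, then no admissible separating set $A$ exists, because $z_0\in A$ forces inductively every $z_l\in A$ (a point within distance $<r/C$ of $A$ cannot lie in $E\setminus A$), whence $z_N\in A\subset B(\xi,r)$, a contradiction.

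For necessity, that a uniformly disconnected set is totally disconnected is immediate. The substantive point is that uniform disconnectedness forces some $n_i=0$, which I prove in contrapositive form. If every $n_i>0$, then the vertical system $\{y\mapsto b_iy+d_i\}_{i=1}^m$ has ratios summing to $1$ and contiguous images, so its attractor, which equals $\pi_y(E)$, is all of $[0,1]$; consequently $\pi_y(S_{\mathbf w}(E))=[d_{\mathbf w},d_{\mathbf w}+b_{\mathbf w}]$ is a full interval for every cylinder. Given any $C$, I would pick a cylinder $\mathbf w$ containing a prescribed $\xi$ deep enough that $a_{\mathbf w}<b_{\mathbf w}/(4C)$, set $r$ just below $b_{\mathbf w}$, take $\xi$ at the bottom of the cylinder, and use the full vertical coverage to select points of $S_{\mathbf w}(E)$ whose $y$-coordinates run from bottom to top spaced by $r/(2C)$. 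Consecutive selected points differ by at most $r/(2C)$ vertically and $a_{\mathbf w}<r/(4C)$ horizontally, hence by less than $r/C$, while the chain spans vertical distance exceeding $r$; this is exactly a fine chain escaping $B(\xi,r)$, ruling out uniform disconnectedness.

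For sufficiency, assume $E$ totally disconnected with $n_{i_0}=0$. The empty row $i_0$ leaves the open slab $[0,1]\times(d_{i_0},d_{i_0+1})$ free of $E$, a vertical gap of height $b_{i_0}$; applying $S_{\mathbf w}$ reproduces, inside every cylinder of height $b_{\mathbf w}$, a vertical gap of relative height $b_{i_0}$, so vertical gaps occur at all scales, which is the scale-matched separation the definition needs since the controlling diameter of a cylinder is its height. The remaining task is to convert the qualitative hypothesis of total disconnectedness into a \emph{uniform} horizontal separation. I would encode horizontal adjacency by the touching graph on the first-level rectangles $R_{ij}=[c_{ij},c_{ij}+a_{ij}]\times[d_i,d_{i+1}]$ and its iterates, and argue by compactness and self-similarity: were the gaps between touching clusters not bounded below relative to width at any fixed finite level, one could extract a limit producing a nondegenerate connected subset of $E$, contradicting total disconnectedness. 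This yields a level $N$ and a constant $\kappa>0$ so that distinct level-$N$ clusters are separated by at least $\kappa$ times their width.

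Finally I would assemble the separating set: given $\xi,r$, choose $\mathbf w\ni\xi$ at the first level with $b_{\mathbf w}\le r$, so $b_{\mathbf w}\asymp r$, then use the empty-row vertical gap to separate in the $y$-direction and the level-$N$ horizontal separation in the $x$-direction, letting $A$ be the cluster around $\xi$ together with the portion of $E$ from which it cannot be separated. A direct estimate bounds $\dist(A,E\setminus A)$ below by a fixed multiple of $r$, giving uniform disconnectedness with a constant depending only on $\{a_{ij}\},\{b_i\},\{c_{ij}\},N,\kappa$. The main obstacle is exactly this sufficiency step: extracting a uniform, scale-invariant horizontal gap from purely qualitative total disconnectedness, and in particular handling the delicate case where first-level rectangles touch, $c_{i(j+1)}=c_{ij}+a_{ij}$, yet the touching fails to propagate into a continuum, which is what forces the compactness-plus-self-similarity argument rather than a one-step estimate.
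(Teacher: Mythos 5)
Your necessity argument is essentially the paper's: all $n_i>0$ forces the projection of $E$ to the $y$-axis to be $[0,1]$, and a deep cylinder with $a_{\mathbf w}/b_{\mathbf w}$ small turns a vertically equispaced selection into a fine chain escaping $B(\xi,r)$; that direction is fine. The gap is in sufficiency, exactly where you flagged the difficulty. First, the compactness argument you propose does not deliver the claimed level-$N$ separation: if touching (or nearly touching) chains of cylinders persist at every level, the nested connected unions you would extract can shrink to a single point, so no nondegenerate continuum arises and total disconnectedness is not contradicted. Persistent touching is genuinely possible here (rows whose horizontal intervals abut both endpoints of $[0,1]$ propagate touching points forever), so ``distinct level-$N$ clusters are separated by at least $\kappa$ times their width'' is not a consequence of total disconnectedness by this route. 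Second, even granting such a statement, the scales do not match: inside the cylinder $S_{\mathbf w}$ with $b_{\mathbf w}\asymp r$ that you select, every horizontal gap has size at most the width $a_{\mathbf w}\ll r$, while the definition demands $\dist(A,E\setminus A)\ge r/C$. A horizontal gap comparable to $r$ has to come from the mutual position of the \emph{many different} fibers $E_y$ over the whole vertical window of height $\asymp r$, not from the subdivision of a single cylinder; your sketch never addresses the fact that these fibers vary with $y$.

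What the paper does instead, and what your outline is missing, is: (i) total disconnectedness forces each row's horizontal maps $\{\psi_{ij}\}_j$ to leave an open gap $I_i$ in $[0,1]$ (otherwise the fiber over the fixed point of $\phi_i$ would be all of $[0,1]$), and from this one gets the key scale-invariant lemma that \emph{every} open interval $I$ contains a subinterval $J$ with $|J|\ge\lambda|I|$ disjoint from a given fiber set $K_{\mathbf i}$ --- proved by a one-step ``largest cylinder meeting the middle third of $I$'' argument, with no compactness needed; (ii) the empty row bounds by a constant $L$ the number of $\delta$-connected pieces $\phi_vF$ of the projection near $y_0$, and the Hausdorff estimate $\hd(K_{\mathbf i},K_{\mathbf j})\le a^*_{i_1}\cdots a^*_{i_k}\le\delta h_\delta$ with $h_\delta\to0$ shows that all fibers over that window lie within $\delta h_\delta$ of a union $G$ of only $L$ model fibers; applying (i) $L$ times to $G$ inside $(x_0-2\delta,x_0-\delta)$ and $(x_0+\delta,x_0+2\delta)$ then produces horizontal gaps of size $\lambda^L\delta\gg\delta h_\delta$, i.e.\ comparable to $r$. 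As written, your sufficiency direction does not go through; it would need to be rebuilt around a gap lemma of type (i) and the fiber-comparison step (ii).
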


David and Semmes~\cite[Proposition~15.11]{DavSe97} proved that every compact, doubling, uniformly disconnected and uniformly perfect set is quasisymmetrically equivalent to the Cantor ternary set. Since self-affine sets are always doubling and uniformly perfect (see ~\cite{XiYiS03}), Theorem~\ref{t:UD} implies
\begin{cor}\label{c:QE}
	A Lalley-Gatzouras set $E$ is quasisymmetrically equivalent to the Cantor ternary set if and only if $E$ is totally disconnected and there exists an $i\in\{1,2,\dots,m\}$ such that $n_i=0$.
\end{cor}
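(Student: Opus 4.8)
The plan is to obtain Corollary~\ref{c:QE} as a direct consequence of Theorem~\ref{t:UD}, combined with the David--Semmes criterion~\cite[Proposition~15.11]{DavSe97} and the fact that the three quantitative properties appearing in that criterion---being doubling, uniformly disconnected and uniformly perfect---are all quasisymmetric invariants. I would treat the two implications separately, since Theorem~\ref{t:UD} supplies exactly the combinatorial condition ($E$ totally disconnected and some $n_i=0$) that it identifies with uniform disconnectedness.

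For the \emph{sufficiency} direction, assume $E$ is totally disconnected and $n_i=0$ for some~$i$. By Theorem~\ref{t:UD}, $E$ is uniformly disconnected. Being a compact subset of~$\mathbb R^2$, it is automatically doubling; and being a non-degenerate self-affine set (so in particular not a single point), it is uniformly perfect by Xie, Yin and Sun~\cite{XiYiS03}. Thus $E$ is compact, doubling, uniformly disconnected and uniformly perfect, and \cite[Proposition~15.11]{DavSe97} yields at once that $E$ is quasisymmetrically equivalent to the Cantor ternary set.

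For the \emph{necessity} direction, suppose $E$ is quasisymmetrically equivalent to the Cantor ternary set via a quasisymmetric map~$f$. The Cantor ternary set is uniformly disconnected, and uniform disconnectedness is preserved (up to adjustment of the constant) under quasisymmetric mappings; hence $E$ is uniformly disconnected. Applying Theorem~\ref{t:UD} in the reverse direction, we conclude that $E$ is totally disconnected and that $n_i=0$ for some~$i$, as required.

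Since the substantive work is already contained in Theorem~\ref{t:UD}, the corollary is essentially a matter of assembling known facts, and I expect no serious obstacle. The only points that require care are recording the non-degeneracy of~$E$ so that the uniform perfectness of~\cite{XiYiS03} applies, and invoking the quasisymmetric invariance of uniform disconnectedness in the necessity direction; both are standard and traceable to~\cite{DavSe97}.
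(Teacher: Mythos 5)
Your proposal is correct and follows essentially the same route as the paper, which likewise derives the corollary immediately from Theorem~\ref{t:UD} together with \cite[Proposition~15.11]{DavSe97} and the doubling/uniform perfectness facts from~\cite{XiYiS03}; your explicit remark that the necessity direction rests on the quasisymmetric invariance of uniform disconnectedness is a detail the paper leaves implicit but is exactly the intended argument.
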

Geometrically, the condition $n_i=0$ for some $i\in\{1,2,\dots,m\}$ means that there are some horizontal line segments in~$[0,1]^2$ do not intersect with the Lalley-Gatzouras set. Corollary~\ref{c:QE} implies that the Lalley-Gatzouras sets that satisfy this condition are essentially different from that do not satisfy this condition.

As another application of Theorem~\ref{t:UD}, we obtain the relationship between the gap sequence of Lalley-Gatzouras set and its box dimension. 
\begin{thm}\label{t:GS}
	Let $E$ be a totally disconnected Lalley-Gatzouras set with $n_i=0$ for some $i\in\{1,2,\dots,m\}$ and $\{\alpha_k\}_k$ the gap sequence of~$E$. Then
	\[ \alpha_k\asymp k^{-1/\bdim E}. \]
\end{thm}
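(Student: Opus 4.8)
The plan is to reduce the two–sided estimate $\alpha_k\asymp k^{-1/\bdim E}$ to a single counting estimate for the function $\Num$, namely
\[
\Num(\delta,E)\asymp\delta^{-s},\qquad s=\bdim E,
\]
valid for all small $\delta>0$. Granting this, the theorem follows by a routine inversion. Since two points at distance $\le\delta$ are $\delta$-equivalent, distinct $\delta$-equivalence classes lie more than $\delta$ apart, and as $\delta$ decreases the classes only split; a direct bookkeeping of the jumps of $\Num$ then gives $\card\{k:\alpha_k>\delta\}=\Num(\delta,E)-1$. Writing $N(\delta):=\card\{k:\alpha_k>\delta\}$, monotonicity of $\{\alpha_k\}$ gives $\{k:\alpha_k>\delta\}=\{1,\dots,N(\delta)\}$, so from $c_1\delta^{-s}\le N(\delta)\le c_2\delta^{-s}$ (for small $\delta$) one gets the upper bound by letting $\delta\uparrow\alpha_k$ in $k\le N(\delta)\le c_2\delta^{-s}$, and the lower bound from $c_1\alpha_k^{-s}\le N(\alpha_k)\le k-1$; for the finitely many small $k$ both sides are trapped between positive constants. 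Thus the whole problem is the displayed estimate on $\Num$.

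First I would establish the purely metric input $N_E(\delta)\asymp\delta^{-s}$, where $N_E(\delta)$ is the largest cardinality of a $\delta$-separated subset of $E$ (comparable, up to constants, to the number of $\delta$-mesh squares meeting $E$). The logarithmic statement $\log N_E(\delta)/\log(1/\delta)\to s$ is exactly the box-dimension computation~\eqref{eq:bdim}, but here I need the sharp two-sided bound with genuine constants. This comes from the combinatorial structure of Lalley–Gatzouras sets: for each $\delta$ one decomposes $E$ into approximate squares of side comparable to $\delta$ by stopping the vertical word when $\prod b_{i_t}\approx\delta$ and the horizontal word at the earlier level where $\prod a_{i_tj_t}\approx\delta$. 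Summing over admissible configurations and using that~\eqref{def_s1} and~\eqref{eq:bdim} make the relevant partition functions equal to $1$, a renewal-type argument bounds the number of approximate squares above and below by constant multiples of $\delta^{-s}$; since each approximate square has diameter $\asymp\delta$, the same bound transfers to $N_E(\delta)$.

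With this in hand the two bounds on $\Num$ separate cleanly, and this is where Theorem~\ref{t:UD} enters. For the upper bound, choosing one point from each $\delta$-equivalence class produces a $\delta$-separated subset of $E$, so $\Num(\delta,E)\le N_E(\delta)\le c_2\delta^{-s}$; this direction needs no disconnectedness. For the lower bound I use that, by Theorem~\ref{t:UD}, the hypotheses (total disconnectedness together with $n_i=0$ for some $i$) make $E$ uniformly disconnected with some constant $C>1$. Take a maximal $4C\delta$-separated set $\{\xi_1,\dots,\xi_M\}\subset E$, so $M=N_E(4C\delta)\ge c_1(4C)^{-s}\delta^{-s}$, and apply uniform disconnectedness at each $\xi_\ell$ with $r=2C\delta$ to obtain $A_\ell$ with $\xi_\ell\in A_\ell\subset B(\xi_\ell,2C\delta)$ and $\dist(A_\ell,E\setminus A_\ell)\ge 2\delta$. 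The separation of the centres makes the $A_\ell$ pairwise disjoint with $\xi_{\ell'}\notin A_\ell$ for $\ell'\ne\ell$, while the gap $2\delta>\delta$ forces every $\delta$-chain issued from $\xi_\ell$ to remain inside $A_\ell$; hence the $\xi_\ell$ lie in $M$ distinct $\delta$-equivalence classes and $\Num(\delta,E)\ge M\ge c_1(4C)^{-s}\delta^{-s}$.

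I expect the main obstacle to be the sharp counting $N_E(\delta)\asymp\delta^{-s}$: the defining equations only produce the box dimension as a limit, and upgrading this to a two-sided comparison with honest constants requires the full approximate-square bookkeeping for variable contraction ratios, i.e.\ controlling the fluctuations of the stopping levels uniformly in $\delta$. By contrast, the role of the condition $n_i=0$ is conceptual rather than computational: through Theorem~\ref{t:UD} it is precisely what supplies the trapping sets $A_\ell$ in the lower bound, without which long thin $\delta$-chains could merge many approximate squares and make $\Num(\delta,E)$ far smaller than $\delta^{-s}$.
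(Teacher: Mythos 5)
Your proposal is correct and follows essentially the same route as the paper, which factors the theorem into exactly your three steps: the equivalence $\alpha_k\asymp k^{-1/\gamma}\iff\Num(\delta,E)\asymp\delta^{-\gamma}$ (cited there from earlier work), the comparison $\Num(\delta,E)\asymp N_\delta(E)$ via the uniform disconnectedness supplied by Theorem~\ref{t:UD}, and the counting estimate $N_\delta(E)\asymp\delta^{-\bdim E}$ (cited from Lalley--Gatzouras). The only differences are cosmetic: you work with maximal separated sets rather than covering numbers and sketch proofs of the two lemmas the paper imports by citation.
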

Here the symbol $\alpha_k\asymp\beta_k$ means that there exists a constant $c>1$ such that $c^{-1}\beta_k\le\alpha_k\le c\beta_k$ for all $k$.

\section{Uniform disconnectedness of Lalley-Gatzouras sets}\label{sec:UD}

We shall prove Theorem~\ref{t:UD} in this section. Let $E$ be a Lally-Gatzouras set with the IFS $\{S_{ij}\}_{(i,j)\in\mathcal D}$ defined by~\eqref{eq:Sk}. 

\subsection{Proof of necessity} The following equivalent condition for uniform disconnectedness is useful (see \cite[\S14.24]{Heino01}). $E$ is uniformly disconnected if and only if there is $\epsilon_0>0$ such that no pair of distinct points $\xi,\xi'\in E$ can be connected by an $\epsilon_0$-chain in~$E$, where an $\epsilon_0$-chain connecting~$\xi$ and~$\xi'$ in~$E$ is a sequence of points $\xi=\xi_0$, $\xi_1$, $\ldots$, $\xi_n=\xi'$ in~$E$ satisfying $|\xi_i-\xi_{i+1}|\le\epsilon_0|\xi-\xi'|$ for all $i=0,1,\dots,n-1$.

We prove the necessary part of Theorem~\ref{t:UD} by using the above equivalent condition. Suppose that $n_i\ne0$ for all $i=1,2,\dots,m$. It suffices to show that, for any $\epsilon_0\in(0,1)$, there are two points $\xi,\xi'$ can be connected by an $\epsilon_0$-chain in~$E$.

For this, observe that the condition $n_i\ne0$ for all $i$ implies that the projection of~$E$ to $y$-axis is~$[0,1]$. Hence we can find a sequence $\{(x_k,k/n)\}_{0\le k\le n}\subset E$, where $n$ is a positive integer with $1/n<\epsilon_0/2$. Pick an affine mapping~$T=S_{i_1j_1}\circ S_{i_2j_2}\circ\dots\circ S_{i_\ell j_\ell}$, where $S_{i_kj_k}$ defined by~\eqref{eq:Sk}, such that 
\[ \frac{a_{i_1j_1}a_{i_2j_2}\cdots a_{i_\ell j_\ell}}{b_{i_1}b_{i_2}\cdots b_{i_\ell}}\le\frac{\epsilon_0}{2}. \]

We claim that $\xi=T(x_0,0)$ and $\xi'=T(x_n,1)$ are two points connected by the $\epsilon_0$-chain $\{T(x_k,k/n)\}_{0\le k\le n}$ in~$E$. Indeed,
\begin{align*}
\bigl|T(x_k,k/n)&-T(x_{k+1},(k+1)/n)\bigr|\le a_{i_1j_1}a_{i_2j_2}\cdots a_{i_\ell j_\ell}|x_k-x_{k+1}|+b_{i_1}b_{i_2}\cdots b_{i_\ell}/n\\
&\le b_{i_1}b_{i_2}\cdots b_{i_\ell}(\epsilon_0|x_k-x_{k+1}|/2+1/n)
\le b_{i_1}b_{i_2}\cdots b_{i_\ell}(\epsilon_0/2+\epsilon_0/2)\\
&=\epsilon_0 b_{i_1}b_{i_2}\cdots b_{i_\ell}\le \epsilon_0 \bigl|T(x_0,0)-T(x_n,1)\bigr|=\epsilon_0|\xi-\xi'|.
\end{align*}
Thus, the claim is true and the proof of necessary part is completed.

\subsection{Proof of sufficiency}
Suppose that $E$ is totally disconnected and there exists an $i\in\{1,2,\dots,m\}$ such that $n_i=0$, we will show that $E$ is uniformly disconnected. The proof is heavily dependent on the special geometrical structure of Lalley-Gatzouras sets. Roughly speaking, the condition $n_i=0$ for some~$i$ means that the set~$E$ is well separated in the vertical direction. Thus, the focus is on the analysis of separation in the horizontal direction. 

We divide the proof into three steps. Step~1 studies the projection of~$E$ on the $y$-axis and the fiber of~$E$ on each horizontal line. Step~2 deals with the separation structure of the projection and the fibers. Step~3 gives the proof of sufficiency.

\subsubsection*{Step~1} In this step, we determine the projection of~$E$ on the $y$-axis and the fiber of~$E$ on each horizontal line.

Recall that the IFS of~$E$ is defined by~\eqref{eq:Sk}. Let
\[\mathcal I=\bigl\{i\in\{1,2,\dots,m\}\colon n_i\ne0\bigr\}.\]
For $i\in\mathcal I$, set $\phi_i(y)=b_iy+d_i$. The projection of $E$ to $y$-axis, denoted by $F$, is the self-similar set generated by the IFS $\Phi=\{\phi_i\colon i\in\mathcal I\}$, i.e.,
\begin{equation}\label{eq:proj}
	F=\bigcup_{i\in\mathcal I}\phi_iF.
\end{equation}

Now we study the fiber $E_y$ of~$E$ for $y\in F$, where $E_y=\{x\colon(x,y)\in E\}$. Clearly, $(x,y)\in E$ if and only if there are infinite words $i_1i_2\dotsc\in\mathcal I{^\mathbb N}$ and $j_1j_2\dotsc$ with $j_k\in\{1,2,\dots,n_{i_k}\}$ such that
\[(x,y)\in\bigcap_{k\ge1}S_{i_1j_1}\circ S_{i_2j_2}\dots\circ S_{i_kj_k}\bigl([0,1]^2\bigr).\]
Since $S_{ij}\colon(x,y)\mapsto(a_{ij}x+c_{ij},b_iy+d_i)$, this is equivalent to that 
\begin{equation}\label{eq:fiby}
	y\in\bigcap_{k\ge1}\phi_{i_1}\phi_{i_2}\circ\dots\circ\phi_{i_k} \bigl([0,1]\bigr)
\end{equation}
and that
\begin{equation}\label{eq:fibx}
	x\in\bigcap_{k\ge1}\psi_{i_1j_1}\psi_{i_2j_2}\circ\dots\circ\psi_{i_kj_k} \bigl([0,1]\bigr),\quad\text{where $\psi_{ij}(x)=a_{ij}x+c_{ij}$}.
\end{equation}

For $i\in\mathcal I$, let
\begin{equation}\label{eq:Psi}
  \Psi_i=\bigl\{\psi_{ij}\colon j\in\{1,2,\dots,n_i\}\bigr\}.
\end{equation}
For a finite word $i_1i_2\ldots i_k\in\mathcal I^k$, let
\[K_{i_1\dots i_k}=\bigcup_{f_1\in\Psi_{i_1},\dots,f_k\in\Psi_{i_k}}
f_1\circ\dots\circ f_k[0,1].\]
For an infinite word $\mathbf i=i_1i_2\ldots\in\mathcal I{^\mathbb N}$, let
\begin{equation}\label{eq:fiber}
	K_{\mathbf i}=\bigcap_{k\ge1} K_{i_1i_2\dots i_k}.
\end{equation}
Then \eqref{eq:fibx} is equivalent to $x\in K_{\mathbf i}$. Given $y\in F$, an infinite word $\mathbf i=i_1i_2\ldots\in\mathcal I^{\mathbb N}$ is called the coding of~$y$ if \eqref{eq:fiby} holds. Note that each point $y\in F$ belongs to at most two different set $\phi_i(F)$ and $\phi_j(F)$, where $i,j\in\mathcal I$. Hence every $y\in F$ has at most two different codings. 
Consequently, we have
\begin{lem}\label{l:fibery}
	For $y\in F$, the fiber~$E_y$ has the form
	\begin{equation}\label{eq:fibery}
		E_y=
		\begin{cases}
		K_{\mathbf i},&\text{if $y$ has only one coding $\mathbf i$};\\
		K_{\mathbf i}\cup K_{\mathbf j},&
		\text{if $y$ has two codings $\mathbf i$ and $\mathbf j$}.
		\end{cases}
	\end{equation}
\end{lem}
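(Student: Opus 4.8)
The plan is to reduce the two-case statement~\eqref{eq:fibery} to the single set identity
\[ E_y=\bigcup\bigl\{K_{\mathbf i}\colon\mathbf i\text{ is a coding of }y\bigr\}. \]
Once this is established, the conclusion follows at once: by the observation preceding the lemma, each $y\in F$ has either one coding~$\mathbf i$ or two codings $\mathbf i,\mathbf j$, so the union on the right is either $K_{\mathbf i}$ or $K_{\mathbf i}\cup K_{\mathbf j}$. Thus the whole task is to prove this identity, which I would do by a double inclusion that simply unwinds the characterization of the points of~$E$ recalled above.

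First I would prove $E_y\subseteq\bigcup_{\mathbf i}K_{\mathbf i}$. Take $x\in E_y$, so that $(x,y)\in E$. By the description of points of~$E$, there exist an infinite word $\mathbf i=i_1i_2\dots\in\mathcal I^{\mathbb N}$ and digits $j_k\in\{1,\dots,n_{i_k}\}$ for which both~\eqref{eq:fiby} and~\eqref{eq:fibx} hold. Condition~\eqref{eq:fiby} says precisely that $\mathbf i$ is a coding of~$y$, while~\eqref{eq:fibx}, being equivalent to $x\in K_{\mathbf i}$, places $x$ in $K_{\mathbf i}$ for this particular coding. Hence $x$ lies in the union on the right. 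For the reverse inclusion, suppose $\mathbf i$ is a coding of~$y$ and $x\in K_{\mathbf i}$. Then~\eqref{eq:fiby} holds by the definition of a coding, and~\eqref{eq:fibx} holds because it is equivalent to $x\in K_{\mathbf i}$; pairing the maps $\psi_{i_kj_k}\in\Psi_{i_k}$ that witness~\eqref{eq:fibx} with the word $\mathbf i$ recovers an admissible pair, so $(x,y)\in E$ and therefore $x\in E_y$. This gives the displayed identity.

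The only point that genuinely deserves care is the equivalence between~\eqref{eq:fibx} and the membership $x\in K_{\mathbf i}$. One implication is immediate: if a coherent sequence $(f_k)$ with $f_k\in\Psi_{i_k}$ realizes~\eqref{eq:fibx}, then $f_1\circ\dots\circ f_k[0,1]\subseteq K_{i_1\dots i_k}$ for every~$k$, whence $x\in\bigcap_k K_{i_1\dots i_k}=K_{\mathbf i}$. The substantive direction is the converse: from $x\in K_{\mathbf i}=\bigcap_k K_{i_1\dots i_k}$ one must extract a \emph{single} sequence $(f_k)$ realizing $x$ simultaneously at all levels, rather than a choice that changes with~$k$. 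I expect this to be the main obstacle, and I would handle it by a compactness (König's lemma) argument on the finitely branching tree of admissible prefixes, using that the $f_1\circ\dots\circ f_k[0,1]$ are nested nonempty compact intervals so that a consistent infinite branch through those levels containing~$x$ exists. Since this equivalence is already granted in the excerpt, the lemma itself reduces to the bookkeeping above, and the selection argument is the sole ingredient lying beyond a direct unwinding of definitions.
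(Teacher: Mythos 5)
Your proof is correct and follows essentially the same route as the paper, which states the lemma as an immediate consequence of the preceding coding discussion without a separate argument. Your explicit reduction to $E_y=\bigcup\{K_{\mathbf i}\colon\mathbf i\text{ a coding of }y\}$ and your identification of the one nontrivial point --- extracting a single consistent branch $(f_k)$ from $x\in\bigcap_k K_{i_1\dots i_k}$ via a K\"onig's lemma argument on the finitely branching tree of admissible prefixes --- is a sound and welcome filling-in of a step the paper leaves implicit.
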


Although the fibers $E_y$'s are different for different $y$'s, the set $K_{\mathbf i}$ is close to~$K_{\mathbf j}$ in the Hausdorff metric if the coding~$\mathbf i$ is close to~$\mathbf j$. Recall that, for any two nonempty compact sets~$A$ and~$B$, the Hausdorff metric~$\hd$ is defined by
\[\hd(A,B)=\max\Bigl(\max_{x\in A}\dist(x,B),\max_{x\in B}\dist(x,A)\Bigr).\]
In the following, we use $|A|$ to denote the diameter of the set~$A$.
\begin{lem}\label{l:hd}
	For $\mathbf i=i_1i_2\ldots i_ki_{k+1}\ldots$ and $\mathbf j=i_1i_2\ldots i_{k}j_{k+1}\ldots$, where $i_{k+1}\ne j_{k+1}$, we have
	\begin{equation}\label{eq:hd}
		\hd(K_{\mathbf i},K_{\mathbf j})\le
		\max_{f_1\in\Psi_{i_1},\dots,f_k\in\Psi_{i_k}}
		\bigl|f_1\circ\dots\circ f_k[0,1]\bigr|=
		a^*_{i_1}a^*_{i_2}\cdots a^*_{i_k},
	\end{equation}
	where $\Psi_i$'s are defined by~\eqref{eq:Psi} and $a^*_i=\max_ja_{ij}$ for $i\in\mathcal I$.
\end{lem}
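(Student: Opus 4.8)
The plan is to exploit that $\mathbf i$ and $\mathbf j$ share the prefix $i_1 i_2 \ldots i_k$, so that both fibers $K_{\mathbf i}$ and $K_{\mathbf j}$ sit inside the same finite union of cylinder intervals $f_1\circ\dots\circ f_k[0,1]$ (with $f_\ell\in\Psi_{i_\ell}$), and differ from each other only \emph{within} each such interval. Since every cylinder interval has length at most $a^*_{i_1}\cdots a^*_{i_k}$, pairing up corresponding pieces across the two fibers will immediately produce the bound.

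First I would record the inclusions $K_{\mathbf i}\subseteq K_{i_1\dots i_k}$ and $K_{\mathbf j}\subseteq K_{i_1\dots i_k}$, which are immediate from the nested definition~\eqref{eq:fiber} since $i_1\dots i_k$ is a prefix of both words. In particular each cylinder interval $f_1\circ\dots\circ f_k[0,1]$ has length $a_{i_1j_1}\cdots a_{i_kj_k}\le a^*_{i_1}\cdots a^*_{i_k}$, which already settles the middle equality in~\eqref{eq:hd} after maximizing each factor independently. The key step is the containment
\[ f_1\circ\dots\circ f_k(K')\subseteq K_{\mathbf j},\qquad\text{where } K'=\bigcap_{\ell>k}K_{j_{k+1}\dots j_\ell}, \]
valid for every choice $f_\ell\in\Psi_{i_\ell}$, where $\mathbf j=i_1\dots i_k j_{k+1}j_{k+2}\dots$. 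This is where the matching prefixes are essential: because $\Psi_{i_\ell}=\Psi_{j_\ell}$ for $\ell\le k$, applying $f_1\circ\dots\circ f_k$ to any generating piece of $K_{j_{k+1}\dots j_\ell}$ yields a generating piece of $K_{j_1\dots j_\ell}$, and since $f_1\circ\dots\circ f_k$ is injective it commutes with the intersection over $\ell$, giving the claim.

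With these facts the Hausdorff bound falls out. Given $x\in K_{\mathbf i}$, I choose $f_\ell\in\Psi_{i_\ell}$ with $x\in f_1\circ\dots\circ f_k[0,1]$ and pick any $y\in K'$ (nonempty, as a decreasing intersection of nonempty compact sets); then $x'=f_1\circ\dots\circ f_k(y)\in K_{\mathbf j}$ by the containment, while $x$ and $x'$ both lie in the common interval $f_1\circ\dots\circ f_k[0,1]$, so $|x-x'|\le a^*_{i_1}\cdots a^*_{i_k}$ and hence $\dist(x,K_{\mathbf j})\le a^*_{i_1}\cdots a^*_{i_k}$. Exchanging the roles of $\mathbf i$ and $\mathbf j$ gives the reverse estimate, and taking the maximum over both directions yields $\hd(K_{\mathbf i},K_{\mathbf j})\le a^*_{i_1}\cdots a^*_{i_k}$.

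The hard part will be the containment $f_1\circ\dots\circ f_k(K')\subseteq K_{\mathbf j}$: one must verify that prepending arbitrary prefix maps from $\Psi_{i_1},\dots,\Psi_{i_k}$ keeps a tail point of the word $\mathbf j$ inside the fiber $K_{\mathbf j}$, which genuinely relies on the hypothesis that $\mathbf i$ and $\mathbf j$ agree on their first $k$ coordinates. The remaining ingredients—the prefix inclusions, the nonemptiness of $K'$, and the observation that $x$ and $x'$ share one short cylinder interval—are routine.
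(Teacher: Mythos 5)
Your proposal is correct and follows essentially the same route as the paper: fix $x\in K_{\mathbf i}$, locate it in a level-$k$ cylinder interval $f_1\circ\dots\circ f_k[0,1]$, and produce a point of $K_{\mathbf j}$ in that same interval, so that the distance is bounded by the interval's length $\le a^*_{i_1}\cdots a^*_{i_k}$. The only difference is that you explicitly verify the nonemptiness of $K_{\mathbf j}\cap f_1\circ\dots\circ f_k[0,1]$ via the containment $f_1\circ\dots\circ f_k(K')\subseteq K_{\mathbf j}$, a step the paper simply asserts.
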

\begin{proof}
	By symmetry, it suffices to show that
	\[\dist(x,K_{\mathbf j})\le a^*_{i_1}a^*_{i_2}\cdots a^*_{i_k}
	\quad\text{for $x\in K_{\mathbf i}$}.\]
	Fix an $x\in K_{\mathbf i}$. By the definition of $K_{\mathbf i}$, there are $f_1\in\Psi_{i_1},\dots,f_k\in\Psi_{i_k}$ such that $x\in f_1\circ\dots\circ f_k[0,1]$. Since $\mathbf j=i_1i_2\ldots i_{k}j_{k+1}\ldots$, there is an $x'\in K_{\mathbf j}\cap f_1\circ\dots\circ f_k[0,1]$. Hence
	\[\dist(x,K_{\mathbf j})\le|x-x'|\le\bigl|f_1\circ\dots\circ f_k[0,1]\bigr|\le a^*_{i_1}a^*_{i_2}\cdots a^*_{i_k}.\qedhere\]
\end{proof}

\subsubsection*{Step~2}

This step studies the separation structure of the projection~$F$ and the sets~$K_{\mathbf i}$'s. We begin with the projection~$F$. 

To avoid confusion with the Hausdorff metric, we use $\dist$ to denote the Euclidean distance and $\dist(A_1,A_2):=\inf\{|x-y|\colon x\in A_1,y\in A_2\}$ for nonempty sets~$A_1,A_2$. For $\delta>0$, let
\[\mathcal I_\delta=\{i_1i_2\dots i_k\in\mathcal I^k\colon k\ge1,
b_{i_1}b_{i_2}\cdots b_{i_k}\le\delta<b_{i_1}b_{i_2}\cdots b_{i_{k-1}}\}.\]
We say $w,w'\in\mathcal I_\delta$ are $\delta$-connected if there exist
$w_0(=w),w_1,\dots,w_n(=w')\in\mathcal I_\delta$ such that
$\dist(\phi_{w_k}(F),\phi_{w_{k+1}}(F))\le\delta$ for
$k=0,1,\dots,n-1$. Here $\phi_w=\phi_{i_1}\circ\phi_{i_2}\circ\dots\circ\phi_{i_\ell}$ for $w=i_1i_2\dots i_\ell$, where $\phi_i$'s are defined in~\eqref{eq:fiby}. For $w\in\mathcal I_\delta$, let
\begin{equation}\label{eq:Idw}
	\mathcal I_\delta(w)=\{v\in\mathcal I_\delta\colon
	\text{$v$ and $w$ are $\delta$-connected}\}.
\end{equation}

The following lemma describes the separation structure of~$F$.
\begin{lem}\label{l:Fud}
	Suppose that there exists $i\in\{1,2,\dots,m\}$ with $n_i=0$, then
	\begin{equation}\label{eq:dcn}
		L:=\sup_{\delta,w}\#\mathcal I_\delta(w)<\infty.
	\end{equation}
\end{lem}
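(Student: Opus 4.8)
The plan is to deduce the lemma from a single geometric feature of the self-similar projection~$F$: because some slot is empty, $F$ carries a complementary gap of \emph{proportional} size at every location and scale. Fix once and for all an index $i^*$ with $n_{i^*}=0$, so that $i^*\notin\mathcal I$ and the open interval $(d_{i^*},d_{i^*+1})$ misses~$F$; also write $b_{\min}=\min_{i\in\mathcal I}b_i>0$. The key statement I would isolate is a \emph{proportional gap property}: there is a constant $\gamma\in(0,1)$, depending only on~$\Phi$, such that every interval $[a,b]$ with $a,b\in F$ contains a complementary interval~$G$ of~$F$ (an interval with $G\cap F=\emptyset$) satisfying $|G|\ge\gamma(b-a)$. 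Granting this, \eqref{eq:dcn} will follow from a short diameter-plus-packing argument using only that the open cylinders $\{\phi_i((0,1))\}_{i\in\mathcal I}$ are pairwise disjoint.

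For the packing step, fix $\delta$ and~$w$ and set $\mathcal C=\mathcal I_\delta(w)$. First I would bound the diameter~$D$ of $\bigcup_{v\in\mathcal C}\phi_v(F)$, whose convex hull $[x_0,x_1]$ has endpoints in~$F$. If $D>\delta/\gamma$, the proportional gap property supplies a complementary interval $G\subseteq[x_0,x_1]$ with $|G|>\gamma D>\delta$. Each $v\in\mathcal C$ has $b_v\le\delta<|G|$, so the interval $\phi_v([0,1])$ is too short to contain~$G$; since $\phi_v(F)\subseteq F$ avoids~$G$, each $\phi_v(F)$ lies entirely on one side of~$G$. This splits $\mathcal C$ into two nonempty classes (the endpoints $x_0,x_1$ are attained), yet any $\delta$-chain realizing the $\delta$-connectedness of~$\mathcal C$ must take a step across~$G$, of length $\ge|G|>\delta$, a contradiction. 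Hence $D\le\delta/\gamma$. Now every $\phi_v([0,1])$ has length $b_v>\delta b_{\min}$, these intervals have pairwise disjoint interiors, and all of them lie in an interval of length $O(\delta/\gamma)$; comparing total length gives $\#\mathcal C\lesssim(\gamma b_{\min})^{-1}$, a bound independent of $\delta$ and~$w$, which is exactly~\eqref{eq:dcn}.

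It remains to establish the proportional gap property, which is where the real work lies. By replacing $[a,b]$ with $\phi_u^{-1}([a,b])$, where $\phi_u(F)$ is the smallest cylinder of~$F$ containing both $a$ and~$b$ (all lengths scale by~$b_u$), I may assume $[a,b]$ is not contained in any single first-level piece $\phi_i([0,1])$; then $a\in\phi_{i_a}(F)$ and $b\in\phi_{i_b}(F)$ with $i_a<i_b$ in~$\mathcal I$. If an empty slot, or a full child, lies strictly between~$i_a$ and~$i_b$, a gap of relative size $\gtrsim b_{\min}$ or $\gtrsim b_{\min}^2$ is immediate. The one delicate case is $i_b=i_a+1$ with $[a,b]$ straddling the common endpoint $t=d_{i_a+1}$. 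Here I would split $[a,b]=[a,t]\cup[t,b]$, rescale each half back to $\phi_{i_a}$ and $\phi_{i_a+1}$, and invoke an endpoint estimate: for $a'\in F$ with $a'<1$, the interval $[a',1]$ always contains the tail cylinder $\phi_m^{\,k+1}([0,1])$, where $k$ is chosen so that $1-a'\asymp b_m^{\,k}$, and this tail carries the empty-slot gap $\phi_m^{\,k+1}\bigl((d_{i^*},d_{i^*+1})\bigr)$, of size $\asymp b_m^{\,k+1}\asymp(1-a')$; symmetrically near~$0$. Applying this to the longer of the two halves yields a gap of size $\gtrsim(b-a)$.

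The main obstacle is precisely this straddling case and the uniformity of~$\gamma$. One must observe that consecutive pieces can touch at all only when $1,m\in\mathcal I$ (equivalently $0,1\in F$), which is exactly when the endpoint estimate above is available; when instead $m\notin\mathcal I$ or $1\notin\mathcal I$, adjacent pieces are separated by a fixed gap and the bound is even easier. Keeping track of these endpoint conditions, together with the rescaling that reduces an arbitrary interval to the first-level situation, is the only genuinely fiddly point; everything else is bookkeeping with the ratios~$b_i$.
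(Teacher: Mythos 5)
Your argument is correct, but it takes a genuinely different route from the paper's. You first establish a uniform \emph{proportional gap property} for the projection $F$ (every interval with endpoints in $F$ contains a complementary interval of relative length $\ge\gamma$), and then run a diameter bound plus a packing count; the hard work is the porosity statement, and in particular the case of an interval straddling the common endpoint of two adjacent first-level pieces, which forces you to track when $0,1\in F$ and to prove an endpoint estimate via the tail cylinders $\phi_m^k$. The paper avoids all of this by a localization trick: it passes to the coarser scale $\delta'=\delta/(\eta b_*)$ and shows that $\bigcup_{v\in\mathcal I_\delta(w)}\phi_v(F)$ can meet at most two cylinders $\phi_u(F)$ with $u\in\mathcal I_{\delta'}$ --- if it met three, the \emph{middle} cylinder would contain the rescaled gap $\phi_{u_2}(I)$ of length $>\delta$, splitting the union into two nonempty pieces at distance $>\delta$ and contradicting $\delta$-connectedness. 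Because the gap used there sits strictly inside a single cylinder, the straddling case never arises, and the count $\#\mathcal I_\delta(w)\le 2\eta^{-1}b_*^{-2}$ follows from the same length-comparison you use. So your approach buys a stronger, reusable statement (uniform porosity of $F$, essentially its uniform disconnectedness) at the cost of a more delicate case analysis whose fiddly points you have correctly identified but only sketched; the paper's proof is shorter and entirely self-contained. If you write yours up in full, be careful at cylinder endpoints: a point $a\in F$ lying on the boundary of $\phi_u([0,1])$ need not belong to $\phi_u(F)$, so the rescaling step should be phrased in terms of a first-level index $i_a$ with $a\in\phi_{i_a}(F)$ rather than mere containment in $\phi_{i_a}([0,1])$.
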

\begin{proof}
	Since $n_i=0$ for some $i\in\{1,2,\dots,m\}$, there exists an interval~$I$ such that $I\subset[0,1]\setminus F$. Let $\eta=|I|$ be the length of~$I$. Write $b_*=\min_{1\le i\le m}b_i$.
	
	If $\delta\ge\eta b_*$, we have
	\[\#\mathcal I_\delta(w)\le\#\mathcal I_\delta\le\delta^{-1}
	\le\eta^{-1}b_*^{-1}\quad\text{for all $w\in\mathcal I_\delta$}.\]
	
	If $\delta<\eta b_*$, fix a $w\in\mathcal I_\delta$. Write $\delta'=\delta/(\eta b_*)$. We claim that there are at most two different $u\in\mathcal I_{\delta'}$ such that
	\begin{equation}\label{eq:phiu}
		\phi_u(F)\cap\bigcup_{v\in\mathcal I_\delta(w)}\phi_v(F)\ne\emptyset.
	\end{equation}
	It follows from the claim that
	\[\bigcup_{v\in\mathcal I_\delta(w)}\phi_v(F)\subset\phi_{u_1}(F)\cup
	\phi_{u_2}(F)\quad\text{for some $u_1,u_2\in\mathcal I_{\delta'}$}.\]
	(Here we allow that $u_1=u_2$. In this case, $u_1$ is the only one word in~$\mathcal I_{\delta'}$ satisfying~\eqref{eq:phiu}). Consequently,
	\[ \#\mathcal I_\delta(w)\le\frac{|\phi_{u_1}(F)|+|\phi_{u_2}(F)|}{\min_{v\in\mathcal I_\delta(w)}|\phi_v(F)|}\le\frac{2\delta'|F|}{\delta b_*|F|}=2\eta^{-1}b_*^{-2}. \]
	Combining this with the case $\delta\ge\eta b_*$, we have
	\[L=\sup_{\delta,w}\#\mathcal I_\delta(w)\le 2\eta^{-1}b_*^{-2}. \]
	
	It remains to verify the claim. Indeed, if otherwise, suppose that there are distinct $u_1,u_2,u_3\in\mathcal I_{\delta'}$ satisfying~\eqref{eq:phiu}. Without loss of generality, suppose that $\phi_{u_1}(F)$ is on the left side of~$\phi_{u_2}(F)$ and $\phi_{u_3}(F)$ is on the right side of~$\phi_{u_2}(F)$. Since $I\subset[0,1]\setminus F$, we have
	\[ \bigcup_{v\in\mathcal I_\delta(w)}\phi_v(F)=A\cup B, \]
	where $A$ contains all the points which are on the left side of~$\phi_{u_2}(I)$ and $B$ contains all the points which are on the right side of~$\phi_{u_2}(I)$. Clearly, $A$ and $B$ are nonempty and $\dist(A,B)\ge|\phi_{u_2}(I)|>\delta'\eta b_*=\delta$. This contradicts the definition of~$\mathcal I_\delta(w)$.
\end{proof}

Now we turn to the separation structure of $K_{\mathbf i}$. 
\begin{lem}\label{l:Ki}
	There exists a constant $\lambda>0$ such that, for each open interval
	$I$ and each infinite word $\mathbf i$, we can find an open interval
	$J\subset I$ satisfying $|J|\ge\lambda|I|$ and $J\cap K_{\mathbf
	i}=\emptyset$.
\end{lem}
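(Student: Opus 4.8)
The plan is to reduce the whole statement to a single \emph{uniform gap constant}. For an infinite word $\mathbf j\in\mathcal I^{\mathbb N}$, write $g(\mathbf j)$ for the length of the largest open subinterval of $[0,1]$ disjoint from $K_{\mathbf j}$. I would first prove that
\[ g_0:=\inf_{\mathbf j\in\mathcal I^{\mathbb N}}g(\mathbf j)>0, \]
and then use self-affinity to transport this single top-level gap into an arbitrary interval $I$. This is the step where total disconnectedness of $E$ enters; the remainder is elementary geometry exploiting that the ratios $a_{ij}$ lie in a fixed interval $[a_*,a^*]\subset(0,1)$, where $a_*=\min_{(i,j)\in\mathcal D}a_{ij}$ and $a^*=\max_{i\in\mathcal I}a^*_i$. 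Throughout, $\sigma$ denotes the shift $\sigma(j_1j_2\dotsc)=j_2j_3\dotsc$.

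To establish $g_0>0$ I would argue as follows. Each $\mathbf j$ codes a point $y\in F$ via~\eqref{eq:fiby}, and by Lemma~\ref{l:fibery} the compact set $K_{\mathbf j}$ is contained in the fiber $E_y$. Since $E$ is totally disconnected, so is every subset of $E_y$; hence the compact set $K_{\mathbf j}$ contains no interval and therefore $g(\mathbf j)>0$. To upgrade this pointwise positivity to a positive infimum, I would use Lemma~\ref{l:hd}, which shows that $\mathbf j\mapsto K_{\mathbf j}$ is continuous from $\mathcal I^{\mathbb N}$ (product topology) into the compact sets of $[0,1]$ under $\hd$, as words agreeing on a length-$k$ prefix yield fibers within $\hd$-distance $(a^*)^k$. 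A direct check shows $g$ is lower semicontinuous with respect to $\hd$: if $\hd(K_n,K)\to0$ and $(a,b)$ is disjoint from $K$, then for large $n$ the shrunken interval $(a+\epsilon,b-\epsilon)$ is disjoint from $K_n$, so $\liminf_n g(K_n)\ge g(K)$. As $\mathcal I^{\mathbb N}$ is compact, the lower semicontinuous function $g$ attains its infimum, which is consequently positive.

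With $g_0$ in hand, fix $I$ and $\mathbf i$. I may assume $I=[p,p+r]\subset[0,1]$: if a definite fraction of $I$ lies outside $[0,1]$ it is already a free gap since $K_{\mathbf i}\subset[0,1]$, and otherwise replacing $I$ by $I\cap(0,1)$ costs only a bounded factor. If the middle half $(p+r/4,p+3r/4)$ is disjoint from $K_{\mathbf i}$, it is the required gap. Otherwise pick $x\in K_{\mathbf i}$ in this middle half and let $C=f_1\circ\dots\circ f_k[0,1]$, with $f_\ell\in\Psi_{i_\ell}$, be the smallest cylinder through $x$ with $|C|\le r/4$; since consecutive cylinder lengths differ by a factor in $[a_*,a^*]$, one has $|C|\in(a_*r/4,\,r/4]$, and because $|C|\le r/4$ with $x$ in the middle half, $C\subset I$. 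I would then transport the top-level gap through $C$: writing $f=f_1\circ\dots\circ f_k$ and noting that on the interior of $C$ the set $K_{\mathbf i}$ coincides with $f(K_{\sigma^k\mathbf i})$ (valid because distinct level-$k$ cylinders have pairwise disjoint interiors), the image under $f$ of a gap of $K_{\sigma^k\mathbf i}$ of length $\ge g_0$ is an open interval $J\subset C\subset I$ with $J\cap K_{\mathbf i}=\emptyset$ and $|J|\ge g_0|C|\ge(g_0a_*/4)\,r$. Taking $\lambda$ to be a fixed multiple of $g_0a_*$ completes the argument.

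The main obstacle is precisely the uniform gap $g_0>0$: pointwise every fiber has a gap, but a priori this gap could shrink to zero along a sequence of codings, and it is the compactness of $\mathcal I^{\mathbb N}$ together with the $\hd$-continuity from Lemma~\ref{l:hd} and the total disconnectedness of $E$ that rules this out. The transport step is then routine once one observes that $J$, being the image under the increasing map $f$ of an open subinterval of $[0,1]$, lands in the interior of $C$ and so cannot be spoiled by points of $K_{\mathbf i}$ sitting on the shared endpoints of neighbouring cylinders.
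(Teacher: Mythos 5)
Your proof is correct, but it secures the key uniform constant by a different mechanism than the paper. The paper never looks at gaps of the limit sets $K_{\mathbf j}$ at all: it notes that total disconnectedness forces, for each of the finitely many rows $i\in\mathcal I$, an open interval $I_i\subset[0,1]\setminus\bigcup_{j}\psi_{ij}[0,1]$ already at the \emph{first} level of the construction, locates a maximal cylinder $A=\psi_{i_1j_1}\circ\dots\circ\psi_{i_kj_k}[0,1]$ meeting the middle third of $I$ and contained in $I$ (so $|A|\ge 3^{-1}a_*|I|$), and takes $J$ to be the image of $I_{i_{k+1}}$ under that cylinder map, giving the explicit constant $\lambda=3^{-1}a_*\min_{i\in\mathcal I}|I_i|$; uniformity is free because $\mathcal I$ is finite. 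You instead show each $K_{\mathbf j}$ has a gap (being a compact subset of a totally disconnected fiber) and upgrade this to a uniform $g_0>0$ via the $\hd$-continuity of $\mathbf j\mapsto K_{\mathbf j}$ from Lemma~\ref{l:hd}, lower semicontinuity of the largest-gap functional, and compactness of $\mathcal I^{\mathbb N}$. This is softer and non-constructive for $g_0$, but it has the virtue of sidestepping the one point the paper leaves implicit, namely why total disconnectedness of $E$ yields a first-level gap in every row (one must observe that $\bigcup_j\psi_{ij}[0,1]=[0,1]$ would force $K_{ii\dots}=[0,1]$ and hence a horizontal segment in $E$). The localization step is essentially the same device in both arguments and both yield a cylinder of length comparable to $a_*|I|$ inside $I$. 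Two points you leave tacit and should record: the existence of a nested chain of cylinders through a point $x\in K_{\mathbf i}$ needs a K\"onig-type selection, since the words witnessing $x\in K_{i_1\dots i_k}$ at different levels $k$ are not a priori compatible; and the identification of $K_{\mathbf i}$ with $f_1\circ\dots\circ f_k(K_{i_{k+1}i_{k+2}\dotsc})$ on the interior of the cylinder uses that distinct level-$k$ cylinders cannot meet one another's interiors, which follows from the hypothesis $c_{i(j+1)}-c_{ij}\ge a_{ij}$. Neither is a genuine gap.
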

\begin{proof}
Fix an open interval $I$ and an infinite word $\mathbf i=i_1i_2\ldots$. Divide $I$ into three intervals of equal length, let $I'$ be the middle open
interval. If $I'\cap K_{\mathbf i}=\emptyset$, we can take $J=I'$ and
$\lambda=1/3$. 

Otherwise, by~\eqref{eq:fiber}, we can find a set $A$ of the form
\[A=\psi_{i_1j_1}\circ\psi_{i_2j_2}\circ\dots\circ\psi_{i_kj_k}[0,1]\]
such that $A\cap I'\ne\emptyset$ and $A\subset I$. We further require that $A$ is the set of largest length satisfying the above conditions. We have
\begin{equation}\label{eq:|A|}
  |A|\ge3^{-1}a_*|I|,\quad\text{where $a_*=\min_{i,j}a_{ij}$}.
\end{equation}
To see this, suppose otherwise $|A|<3^{-1}a_*|I|$ and let
\[A^*=\psi_{i_1j_1}\circ\psi_{i_2j_2}\circ\dots\circ\psi_{i_{k-1}j_{k-1}}[0,1].\]
Then $|A^*|<a_{i_kj_k}|A|<3^{-1}I$ and $A^*\cap I'\supset A\cap I'\ne\emptyset$. Since $I'\subset I$ is the middle open interval of length~$|I|/3$, we have $A^*\subset I$. Therefore $A^*$ also satisfies the conditions and $|A^*|>|A|$. This contradicts the fact that $|A|$ is largest.

Since $E$ is totally disconnected, for each $i\in\mathcal I$, we can find an open interval
$I_i\subset[0,1]\setminus\bigcup_{j=1}^{n_i}\psi_{ij}[0,1]$. Let
\[J=\psi_{i_1j_1}\circ\psi_{i_2j_2}\circ\dots\circ\psi_{i_kj_k} I_{i_{k+1}},\]
then $J\subset A\subset I$, $J\cap K_{\mathbf i}=\emptyset$ and
$|J|=|A|\cdot|I_{i_{k+1}}|\ge3^{-1}a_*|I_{i_{k+1}}|\cdot|I|$ by~\eqref{eq:|A|}. Thus, $J$ is desired and we can take
\[\lambda=3^{-1}a_*\cdot\min_{i\in\mathcal I}|I_i|.\qedhere\]
\end{proof}

\subsubsection*{Step~3}
We can prove the sufficiency of Theorem~\ref{t:UD} now. Let $(x_0,y_0)\in E$ and $\delta>0$ sufficiently small. Pick $w\in\mathcal I_\delta$ such that $y_0\in\phi_wF$. Let
\begin{equation}\label{eq:B}
B=\bigcup_{v\in\mathcal I_\delta(w)}\phi_vF.
\end{equation}
By the definition of $\mathcal I_\delta(w)$ (see~\eqref{eq:Idw}), we have
\begin{equation}\label{eq:ydc}
(y_0-\delta,y_0+\delta)\cap F\subset B\quad\text{and}\quad
\dist(B,F\setminus B)>\delta.
\end{equation}
By~\eqref{eq:dcn},
\begin{equation}\label{eq:diameter}
|B|\le 2L\delta.
\end{equation}

For each $v\in\mathcal I_\delta(w)$, pick an infinite word $\mathbf i_v$ which starts with the finite word~$v$. Let
\begin{equation}\label{eq:G}
G=\bigcup_{v\in\mathcal I_\delta(w)}K_{\mathbf i_v}.
\end{equation}
Let
\[h_\delta=\max_{\substack{i_1i_2\ldots i_k\in\mathcal I_\delta\\
i_\ell j_\ell\in\mathcal D\ \text{for $1\le\ell\le k$}}}
\frac{a_{i_1j_2}a_{i_2j_2}\cdots a_{i_kj_k}}{b_{i_1}b_{i_2}\cdots
b_{i_k}},\] 
note that $h_\delta\to0$ as $\delta\to0$. By Lemma~\ref{l:fibery} and~\ref{l:hd}, for all $y\in B$, we have
\begin{equation*}
E_y\subset G_{\delta h_\delta}:=\{x\colon\dist(x,G)\le \delta h_\delta\}.
\end{equation*}
It follows that
\begin{equation}\label{eq:GG}
\{(x,y)\in E\colon y\in B\}\subset G_{\delta h_\delta}\times B.
\end{equation}

Combining~\eqref{eq:G} and $\#\mathcal I_\delta(w)\le L$, using Lemma~\ref{l:Ki} at most $L$ times, we have, for each open interval $I$, there exists an open interval $J\subset I$ satisfying $J\cap G=\emptyset$ and
$|J|\ge\lambda^L|I|$. Let $I'=(x_0-2\delta,x_0-\delta)$ and
$I''=(x_0+\delta,x_0+2\delta)$, suppose $J'=(a',b')\subset I'$ and
$J''=(a'',b'')\subset I''$ are such subintervals. Let $G'=G\cap[b',a'']$
and $G''=G\setminus G'$, then $\dist(G',G'')\ge\lambda^L\delta$.

Finally, let $\delta$ be so small that $\delta
h_\delta<3^{-1}\lambda^L\delta$, then
\[G_{\delta h_\delta}=G'_{\delta h_\delta}\cup G''_{\delta h_\delta}\quad
\text{and}\quad\dist(G'_{\delta h_\delta},G''_{\delta h_\delta})
\ge3^{-1}\lambda^L\delta,\]
where $G'_{\delta h_\delta}=\{x\colon\dist(x,G')\le \delta h_\delta\}$ and $G''_{\delta h_\delta}=\{x\colon\dist(x,G'')\le \delta h_\delta\}$. We also have
\[(x_0-\delta,x_0+\delta)\subset G'_{\delta h_\delta}\quad\text{and}\quad
|G'_{\delta h_\delta}|\le4\delta.\]
Combining this with~\eqref{eq:B}, \eqref{eq:ydc}, \eqref{eq:diameter}
and~\eqref{eq:GG}, we have
\[(x_0-\delta,x_0+\delta)\times(y_0-\delta,y_0+\delta)\cap E
\subset(G'_{\delta h_\delta}\times B)\cap E,\quad
|E\cap(G'_{\delta h_\delta}\times B)|\le8L\delta\]
and
\[\dist\bigl(E\cap(G'_{\delta h_\delta}\times B),
E\setminus(G'_{\delta h_\delta}\times B)\bigr)\ge3^{-1}\lambda^L\delta.\]
This means that the set~$E$ is uniformly disconnected and the proof of sufficient part is completed.

\section{Gap sequence of Lalley-Gatzouras sets}\label{sec:GS}

This section is devoted to the proof of Theorem~\ref{t:GS}. We divide the proof into three lemmas.

\begin{lem}[\cite{DeWaX15,MiXiX17}]\label{l:gs}
	Let $\{\alpha_k\}_{k\ge1}$ be the gap sequence of a compact set $A$. Then
	\[
	\alpha_k\asymp k^{-1/\gamma}\iff \Num(\delta, A)\asymp \delta^{-\gamma}.
	\]
	Here $\gamma>0$.
\end{lem}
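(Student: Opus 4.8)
The plan is to reduce the statement to a standard duality between the decay rate of a non-increasing sequence and the growth rate of its counting function, after first expressing $\Num(\delta,A)$ directly in terms of the gap sequence. The key structural observation is that $\delta\mapsto\Num(\delta,A)$ is a non-increasing, right-continuous integer step function which equals $1$ for $\delta\ge\diam A$ and is finite for every $\delta>0$, since the compact set $A$ meets only finitely many $\delta$-equivalence classes. By definition each gap value $\alpha_k=\delta$ is recorded with multiplicity $\Num(\delta^-,A)-\Num(\delta,A)$, that is, with the size of the upward jump of $\Num$ as the scale decreases through $\delta$.

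First I would establish the exact identity
\[ \Num(\delta,A)=\#\{k\colon\alpha_k>\delta\}+1. \]
This follows by summing the multiplicities of all gap values strictly larger than $\delta$: the finite sum $\sum_{t>\delta}\bigl(\Num(t^-,A)-\Num(t,A)\bigr)$ telescopes over the jump points $t>\delta$ and collapses to $\Num(\delta,A)-\Num(\infty,A)=\Num(\delta,A)-1$. Here I would use right-continuity to identify the constant value of $\Num$ on the interval below the first jump $t_1>\delta$ with $\Num(\delta,A)$, and the fact that $\Num\equiv1$ above the largest jump. Careful bookkeeping of left limits versus values, and of strict versus non-strict inequalities, is required; this is the one place where the multiplicity convention in the definition of the gap sequence must be handled precisely, and I expect it to be the main (though purely combinatorial) obstacle.

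Writing $M(\delta)=\#\{k\colon\alpha_k>\delta\}$, the identity reads $\Num(\delta,A)=M(\delta)+1$. Since $\gamma>0$ forces $\Num(\delta,A)\to\infty$ as $\delta\to0$, the additive constant is asymptotically negligible, so $\Num(\delta,A)\asymp\delta^{-\gamma}$ is equivalent to $M(\delta)\asymp\delta^{-\gamma}$. It therefore remains to prove the purely sequential equivalence $\alpha_k\asymp k^{-1/\gamma}\iff M(\delta)\asymp\delta^{-\gamma}$, using only that $\{\alpha_k\}$ is positive and non-increasing.

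For the forward direction, the bounds $c^{-1}k^{-1/\gamma}\le\alpha_k\le ck^{-1/\gamma}$ turn the condition $\alpha_k>\delta$ into two-sided control of $k$ by a constant multiple of $\delta^{-\gamma}$, from above via the upper bound on $\alpha_k$ and from below via the lower bound, giving $M(\delta)\asymp\delta^{-\gamma}$. For the converse, I would fix $k$ and exploit monotonicity: every $\delta<\alpha_k$ satisfies $M(\delta)\ge k$, while every $\delta\ge\alpha_k$ satisfies $M(\delta)\le k-1$; combining these with $C^{-1}\delta^{-\gamma}\le M(\delta)\le C\delta^{-\gamma}$ and letting $\delta\to\alpha_k$ yields the two-sided bound $C^{-1/\gamma}k^{-1/\gamma}\le\alpha_k\le C^{1/\gamma}k^{-1/\gamma}$. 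Both implications are routine once the correspondence between $\Num$ and $M$ is in hand, so the substance of the argument lies entirely in the telescoping identity of the second paragraph.
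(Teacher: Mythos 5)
The paper does not prove this lemma at all: it is quoted as a known result with a pointer to \cite{DeWaX15,MiXiX17}, so there is no in-paper argument to compare against. Your self-contained proof is correct and follows the standard route of those references: establish the distribution-function identity $\Num(\delta,A)=\#\{k\colon\alpha_k>\delta\}+1$ by telescoping the jump multiplicities, then invoke the elementary duality between the decay rate of a non-increasing sequence and the polynomial growth of its counting function. Two small points deserve explicit justification rather than assertion. First, the right-continuity of $\delta\mapsto\Num(\delta,A)$, which your telescoping needs in order to identify $\Num(t_i)$ with the constant value of $\Num$ on the interval just above $t_i$: this holds because each $\delta$-equivalence class is closed, there are finitely many of them, and distinct classes are at distance strictly greater than $\delta$ (by compactness the infimum of distances is attained), so the classes, and hence $\Num$, are unchanged when $\delta$ is increased slightly. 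Second, the finiteness of $\Num(\delta,A)$ follows from total boundedness, since any set of diameter at most $\delta$ meets at most one class. You should also note the trivial range $\delta\ge\diam A$, where $\Num\equiv1$ while $\delta^{-\gamma}$ is merely bounded; the asserted equivalence is to be read as $\delta\to0$ (equivalently for $\delta$ below some fixed scale), and your remark that the additive constant $+1$ is negligible only applies in that regime. With those caveats made explicit, the argument is complete.
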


\begin{lem}\label{l:nmb}
	If $A\subset\mathbb R^d$ is a uniformly disconnected compact set, then
	\[ \Num(\delta, A)\asymp N_\delta(A)\quad
	\text{for $\delta>0$}. \] 
	Here $N_\delta(A)$ denotes the smallest number of sets of diameter~$\delta$ that cover~$A$. 
\end{lem}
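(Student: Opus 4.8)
The plan is to prove the two-sided estimate by establishing the inequalities $\Num(\delta,A)\le N_\delta(A)$ and $N_\delta(A)\le M\,\Num(\delta,A)$ separately, where $M$ is a constant depending only on the ambient dimension~$d$ and on the disconnectedness constant but \emph{not} on~$\delta$; taken together these yield $\Num(\delta,A)\asymp N_\delta(A)$ with a single constant valid for all $\delta>0$. The first inequality is elementary and makes no use of uniform disconnectedness. I would fix an optimal cover of $A$ by $N_\delta(A)$ sets of diameter at most~$\delta$. Any two points of $A$ lying in a common cover set are within distance $\delta$, hence $\delta$-equivalent (the two-term chain suffices). Thus each cover set that meets $A$ lies inside a single $\delta$-equivalence class, while the cover sets jointly exhaust $A$; since every class is therefore met by at least one cover set, counting gives $\Num(\delta,A)\le N_\delta(A)$.

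For the reverse inequality I would invoke the equivalent formulation of uniform disconnectedness recalled at the beginning of Section~\ref{sec:UD} (see~\cite[\S14.24]{Heino01}): there is $\epsilon_0>0$ so that no two distinct points of $A$ are joined by an $\epsilon_0$-chain. The crucial observation is that this forces every $\delta$-equivalence class to have uniformly small diameter. Indeed, if distinct $\xi,\xi'\in A$ are $\delta$-equivalent, they are joined by a chain all of whose steps are at most~$\delta$; were $\delta\le\epsilon_0|\xi-\xi'|$, the same chain would satisfy $|\xi_i-\xi_{i+1}|\le\delta\le\epsilon_0|\xi-\xi'|$ and thus be an $\epsilon_0$-chain, contradicting the characterization. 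Hence $|\xi-\xi'|<\delta/\epsilon_0$, so each $\delta$-equivalence class has diameter at most $\delta/\epsilon_0$.

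Once this diameter bound is in hand, the conclusion follows by a scaling argument in $\mathbb R^d$: a set of diameter at most $\delta/\epsilon_0$ is contained in a cube of side $\delta/\epsilon_0$, which can be partitioned into at most $M:=\bigl(\lceil\sqrt d/\epsilon_0\rceil\bigr)^{d}$ subcubes of diameter at most~$\delta$. Consequently each of the $\Num(\delta,A)$ equivalence classes is covered by at most $M$ sets of diameter $\delta$, which gives $N_\delta(A)\le M\,\Num(\delta,A)$; combining the two inequalities proves the lemma. I expect the only genuinely substantive step to be extracting the uniform diameter bound for the equivalence classes from the $\epsilon_0$-chain characterization—this is where uniform disconnectedness is essential. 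The covering count is then routine, and the constant $M$ is manifestly independent of $\delta$ by scale invariance of $\mathbb R^d$.
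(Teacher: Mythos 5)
Your proof is correct and follows essentially the same route as the paper: one direction comes from the fact that any two points in a common cover set of diameter $\delta$ are $\delta$-equivalent, and the other from the uniform diameter bound $|\xi-\xi'|<\delta/\epsilon_0$ on each $\delta$-equivalence class, which you correctly extract from the $\epsilon_0$-chain characterization of uniform disconnectedness. The only cosmetic difference is that you make the comparison constant explicit by subdividing a covering cube into $M$ subcubes, whereas the paper covers $A$ by the classes themselves to get $N_{c\delta}(A)\le\Num(\delta,A)$ and then invokes the standard Euclidean fact $N_{c\delta}(A)\asymp N_\delta(A)$.
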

\begin{proof}
	Recall that $\Num(\delta,A)$ is the number of $\delta$-equivalent classes of~$A$. Fix $\delta>0$, for $x\in A$, let $A_x$ be the set of all points $\delta$-equivalent to~$x$.
	
	By the definition of uniform disconnectedness, we have $|A_x|\le c\delta$ for all $x\in A$ and a constant $c$. Therefore,
	\begin{equation}\label{eq:Nc}
		\Num(\delta,A)\ge N_{c\delta}(A).
	\end{equation}
	
	By the definition of $\delta$-equivalence, we have $\dist(A_x,A_y)>\delta$ if $A_x\ne A_y$. Therefore,
	\begin{equation}\label{eq:N2}
		\Num(\delta,A)\le N_{\delta/2}(A).
	\end{equation}
	
	Now notice that in Euclidean spaces, we always have $N_{c\delta}(A)\asymp N_\delta(A)\asymp N_{\delta/2}(A)$. Together with~\eqref{eq:Nc} and~\eqref{eq:N2}, this completes the proof.
\end{proof}

\begin{lem}\label{l:nmbx}
	Let $E$ be a Lalley-Gatzouras set, then 
	\[ N_\delta(E)\asymp\delta^{-\bdim E}. \]
\end{lem}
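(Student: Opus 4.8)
The plan is to prove that $N_\delta(E)\asymp\delta^{-\bdim E}$ by exploiting the product-like structure of a Lalley-Gatzouras set along the lines already established in Step~1 and Step~2 of the previous section. The box dimension $s=\bdim E$ is characterized by~\eqref{eq:bdim}, so both inequalities $N_\delta(E)\le c\delta^{-s}$ and $N_\delta(E)\ge c^{-1}\delta^{-s}$ must be extracted from a careful covering count that respects the anisotropy of the maps~$S_{ij}$.

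First I would set up the natural \emph{approximate squares} adapted to the construction. Because each $S_{ij}$ contracts by $b_i$ vertically and $a_{ij}<b_i$ horizontally, a cylinder $S_{i_1j_1}\circ\dots\circ S_{i_kj_k}([0,1]^2)$ is a thin rectangle of height $b_{i_1}\cdots b_{i_k}$ and width $a_{i_1j_1}\cdots a_{i_kj_k}$. Given $\delta>0$, the idea is to first cut off the vertical coordinate at the scale where the height drops to~$\delta$, i.e.\ take words $i_1\dots i_k\in\mathcal I_\delta$ so that $b_{i_1}\cdots b_{i_k}\le\delta<b_{i_1}\cdots b_{i_{k-1}}$, and then continue refining only the horizontal direction until the width also reaches scale~$\delta$. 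Each resulting piece is comparable to a $\delta\times\delta$ square, so the number of such pieces gives $N_\delta(E)$ up to a constant. Counting them amounts to summing, over $i_1\dots i_k\in\mathcal I_\delta$, the number of additional horizontal subdivisions needed, which is governed by the ratios $a_{ij}/b_i$; balancing the two scales is exactly what produces the exponent~$s$ from~\eqref{eq:bdim}, with the first-level vertical exponent $s_1$ from~\eqref{def_s1} appearing naturally in the vertical count.

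The upper bound $N_\delta(E)\le c\delta^{-s}$ should follow by showing these approximate squares form a cover of controlled cardinality; submultiplicativity of the counting function together with the defining equation~\eqref{eq:bdim} yields the power law. For the lower bound I would use the separation already secured in Step~2: the projection~$F$ is well separated by Lemma~\ref{l:Fud}, so distinct vertical strips at scale~$\delta$ cannot be merged into too few covering sets, and Lemma~\ref{l:Ki} gives definite horizontal gaps inside each fiber~$K_{\mathbf i}$ so that the horizontal pieces are genuinely separated at scale proportional to~$\delta$. This separation prevents a single set of diameter~$\delta$ from covering more than a bounded number of approximate squares, giving $N_\delta(E)\ge c^{-1}\delta^{-s}$.

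The main obstacle I anticipate is bookkeeping the anisotropic scales cleanly, in particular handling the boundary between the vertical and horizontal refinement uniformly in~$\delta$. The two-step refinement (vertical to scale~$\delta$, then horizontal to scale~$\delta$) does not quite partition the construction words along a single cylinder level, so one must verify that the overlap of an approximate square with the grid of $\delta$-boxes is bounded and that the transition level contributes only a constant factor. I expect that once the approximate squares are defined precisely, the estimate reduces to the elementary identity~\eqref{eq:bdim} via the standard Lalley--Gatzouras pressure argument, and the separation lemmas from Section~\ref{sec:UD} take care of the lower bound without further difficulty.
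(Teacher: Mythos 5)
Your overall strategy (approximate squares plus the defining equation \eqref{eq:bdim}) is the right one, and it is essentially what the paper delegates to the literature: the paper gives no proof of this lemma, only a pointer to Lemma~2.1, Lemma~2.4 and Theorem~2.4 of \cite{LalGa92}. But as written your sketch has two genuine problems. First, the approximate squares are built in the wrong order. Since $a_{ij}<b_i$, a cylinder $S_{i_1j_1}\circ\dots\circ S_{i_kj_k}([0,1]^2)$ is \emph{taller than it is wide}; at your vertical stopping time $b_{i_1}\cdots b_{i_k}\le\delta$ the width $a_{i_1j_1}\cdots a_{i_kj_k}$ is already below $\delta$, so ``continuing to refine the horizontal direction until the width reaches $\delta$'' is vacuous, and further refinement only makes the pieces narrower. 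The resulting pieces are not comparable to $\delta\times\delta$ squares, and counting them yields $\delta^{-t}$ with $t$ solving $\sum_{i}n_ib_i^{t}=1$, not the exponent $s$ of \eqref{eq:bdim}. The correct construction is the reverse: stop the full refinement when the \emph{width} first reaches $\delta$ (the height is then still $\ge\delta$), and subdivide the height by further iterates of the vertical maps $\phi_i$ alone; it is precisely this mismatch of levels that makes both exponents $s_1$ of \eqref{def_s1} and $s$ of \eqref{eq:bdim} appear.

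Second, the lower bound cannot be routed through Lemmas~\ref{l:Fud} and~\ref{l:Ki}: those require, respectively, that $n_i=0$ for some $i$ and that $E$ be totally disconnected, neither of which is assumed in the present lemma, which is stated for an arbitrary Lalley-Gatzouras set. Even where they do apply, Lemma~\ref{l:Ki} only produces \emph{some} gap inside each interval (a porosity statement); it does not give a separation of order $\delta$ between adjacent approximate squares, so it does not prevent a single set of diameter $\delta$ from meeting many of them. What actually powers the lower bound is the pairwise disjointness of the open rectangles $S_{ij}\bigl((0,1)^2\bigr)$ built into the definition, together with the Bernoulli measure assigning weight $b_i^{s_1}a_{ij}^{s-s_1}$ to $S_{ij}$: each approximate square then carries measure comparable to $\delta^{s}$, and the mass distribution principle gives $N_\delta(E)\ge c^{-1}\delta^{-s}$ with no separation hypotheses at all. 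I recommend either carrying out that measure argument explicitly or simply citing \cite{LalGa92} as the paper does.
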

\begin{proof}
	This lemma is proved by Lalley and Gatzouras~\cite{LalGa92}, although they didn't give an explicit statement regarding this. See the process of proofs of Lemma~2.1, Lemma~2.4 and Theorem~2.4 in~\cite{LalGa92}.
\end{proof}

Clearly, Theorem~\ref{t:GS} follows from Theorem~\ref{t:UD} and the three lemmas above.

\subsection*{An unsolved problem}

We end this paper with the following unsolved problem:
\begin{quote}\itshape 
	What is the limit behavior of gap sequence of Lalley-Gatzouras set when $n_i\ne0$ for all~$i$?
\end{quote}

The authors~\cite{MiXiX17} solved this problem for McMullen sets, but the argument is not applicable for Lalley-Gatzouras sets.

\end{document}